\newtheorem{theorem}{Theorem} 
\newtheorem{lemma}[theorem]{Lemma}
\newtheorem{proposition}[theorem]{Proposition}
\newtheorem{corollary}[theorem]{Corollary}
\theoremstyle{definition}
\def\setupbib{\catcode`@=\active}
\def\gatherkey#1#2{\gatherkeyaux{#1}#2\gatherkeyaux}
\def\gatherkeyaux#1#2,#3\gatherkeyaux{\bib{#2}{#1}{#3}}
\renewcommand{\mod}[1]{{\ifmmode\text{\rm\ (mod~$#1$)}\else\discretionary{}{}{\hbox{ }}\rm(mod~$#1$)\fi}}
\def\a{\mathfrak{a}}
\def\c{\mathfrak{c}}
\def\bk{\boldsymbol{k}}
\def\to{\rightarrow}
\def\1{1\!\!1}
\def\Dif{\mathfrak{D}}   
\def\ringO{\mathcal{O}}     
\def\norm{{\rm N}}       
\def\trace{{\rm Tr}} 
\newcommand{\bz}{{\boldsymbol{z}}}
\newcommand{\SL}{{\rm SL}_2}
\thanks{Research of the first author is partially supported by an NSERC Discovery Grant.}
\date{\today}
\keywords{\noindent Hilbert modular forms, derivatives of $L$-functions, non-vanishing of $L$-functions}
\subjclass[2010]{Primary 11F41, 11F67; Secondary 11F30, 11F11, 11F12, 11N75.}
\begin{document}
\author{Alia Hamieh}

\address[Alia Hamieh]{Department of Mathematics and Statistics \\
        University of Northern British Columbia \\
        Prince George, BC V2N4Z9 \\
        Canada}
\email{alia.hamieh@unbc.ca}

\author{Wissam Raji}
\address[Wissam Raji]{
Department of Mathematics\\
American University of Beirut\\
P.O. Box 11-0236\\
Riad El Solh\\
Beirut 1107 2020\\
Lebanon}

\email{wr07@aub.edu.lb}

\title[Non-vanishing of Derivatives of $L$-functions of Hilbert Modular Forms]{Non-vanishing of Derivatives of $L$-functions of Hilbert Modular Forms in the Critical Strip}

\begin{abstract}
In this paper, we show that, on average, the derivatives of $L$-functions of cuspidal Hilbert modular forms with sufficiently large weight $k$ do not vanish on the line segments $\Im(s)=t_{0}$, $\Re(s)\in(\frac{k-1}{2},\frac{k}{2}-\epsilon)\cup(\frac{k}{2}+\epsilon,\frac{k+1}{2})$. This is analogous to the case of classical modular forms.
 \end{abstract}

\maketitle

\section{Introduction}

In \cite{kohnen}, Kohnen proved that given any real number $t_0$ and $\epsilon>0$, then for $k$ large enough, the average of the normalized L-functions $L^*(f,s)$ with $f$ varying over a basis of Hecke eigenforms of weight $k$ on $SL_2(\mathbb{Z})$ does not vanish on the line segment $$ \Im(s)=t_0, \ \ (k-1)/2< \Re(s)<k/2-\epsilon, k/2+\epsilon<\Re(s)<(k+1)/2.$$  Recently in \cite{KSW}, Kohnen, Sengupta and Weigel extended their method and showed a non-vanishing result for the derivatives of $L$-functions associated to modular forms of integer weight on the full group.  In particular, they show that for $k$ large enough, \begin{equation}
\sum_{j=1}^{d}\frac{1}{\left<f_{k,j}, f_{k,j}\right>}\frac{d^n}{ds^n}[L^*(f_{k,j},s)]
\end{equation}
does not vanish on the line segment $\Im(s)=t_0$, $(k-1)/2< \Re(s)<k/2-\epsilon, k/2+\epsilon<\Re(s)<(k+1)/2$.  

In \cite{HR}, we generalized the result of \cite{kohnen} to the context of cuspidal Hilbert modular forms. In order to describe our work, we introduce the following notation. Let $F$ be a totally real number field of  degree $n$ over $\mathbb{Q}$. Let $\ringO_F$ be the ring of integers of $F$, and assume its narrow class number is equal to 1.  For $\bk\in2\mathbb{N}^n$, we denote by $\mathcal{S}_{\bk}(\SL(\ringO_{F}))$ the space of Hilbert cusp forms of weight $\bk$ for $\SL(\ringO_{F})$.  Our main result in \cite{HR} is the following theorem.
\begin{theorem}\label{thm:old}
Let $\bk=(k,k,\dots,k)\in2\mathbb{N}^n$, and  let $\mathcal{B}_{\bk}(\ringO_{F})$ be a basis of normalized Hecke eigenforms of $\mathcal{S}_{\bk}(\SL(\ringO_{F}))$. Let $t_{0}\in\mathbb{R}$ and $\epsilon>0$. Then there exists a constant $C$ depending only on $t_{0}$, $\epsilon$ and $F$ such that for $k>C$ the average \[\sum_{f\in\mathcal{B}_{\bk}(\ringO_{F})}\frac{\Lambda(f,s)}{\left<f,f\right>}\] is non-vanishing for any $s=\sigma+it_{0}$ with $\sigma\in(\frac{k-1}{2},\frac{k}{2}-\epsilon)\cup(\frac{k}{2}+\epsilon,\frac{k+1}{2})$.
\end{theorem}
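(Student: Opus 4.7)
My plan is to evaluate the average by the Petersson trace formula for Hilbert modular forms, isolating a non-vanishing gamma factor as the main term while the Kloosterman--Bessel remainder becomes negligible as $k\to\infty$.

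Since the narrow class number of $F$ is $1$, every ideal of $\ringO_F$ has a totally positive generator (unique up to totally positive units), and I would start by writing
\[
\Lambda(f,s)\;=\;\left(\frac{\sqrt{|d_F|}}{(2\pi)^n}\right)^{\!s}\Gamma(s)^{n}\sum_{(\nu)}\frac{a(\nu)}{N(\nu)^{s}}.
\]
To make this valid throughout the target strip, I would split the defining Mellin integral at the fixed point of $y\mapsto 1/y$ and apply the functional equation $\Lambda(f,s)=\pm\Lambda(f,k-s)$, yielding $\Lambda(f,s)=\sum_{(\nu)}a(\nu)\,g_s(\nu)$ with weights $g_s(\nu)$ built from multi-dimensional incomplete-gamma factors that decay exponentially in $N(\nu)$ uniformly in $s$. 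Averaging, swapping sums, and using $a(1)=1$ for normalised Hecke eigenforms, the Petersson trace formula for $\S_\bk(\SL(\ringO_F))$ gives
\[
\sum_f\frac{a(\nu)}{\langle f,f\rangle}\;=\;C_\bk\,\delta_{\nu,1}+C_\bk\sum_{(c)}\frac{\mathrm{KS}(\nu,1;c)}{N(c)}\prod_{i=1}^{n}J_{k-1}\!\left(\frac{4\pi\sqrt{\nu^{(i)}}}{|c^{(i)}|}\right)
\]
for an explicit constant $C_\bk$. The $\delta_{\nu,1}$ term contributes the main term $C_\bk\,g_s(1)$, of order $|\Gamma(s)^{n}\pm\Gamma(k-s)^{n}|$ up to a nonzero exponential prefactor.

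For the off-diagonal I would combine the universal bound $|J_{k-1}(x)|\leq (x/2)^{k-1}/\Gamma(k)$ with a Weil-type estimate $|\mathrm{KS}(\nu,1;c)|\ll N(c)^{1/2+\epsilon}$ (which makes the $(c)$-sum absolutely convergent for $k$ large) and with the exponential decay of $g_s(\nu)$ in $N(\nu)$ for $\nu\neq 1$. Stirling's formula then shows that the off-diagonal is smaller than the main term by a factor tending to $0$ super-exponentially in $k$, so the triangle inequality
\[
\Big|\sum_f\frac{\Lambda(f,s)}{\langle f,f\rangle}\Big|\;\geq\;|C_\bk|\,|g_s(1)|-|\text{off-diagonal}|\;>\;0
\]
holds for $k$ sufficiently large in terms of $t_0$, $\epsilon$, and $F$.

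The hardest part is ensuring uniformity across the prescribed range of $\sigma$. The split representation produces two main-term contributions of comparable size, roughly $\Gamma(s)^{n}$ and $\pm\Gamma(k-s)^{n}$, which can conspire to cancel near $\sigma=k/2$ for particular values of $t_0$; this is precisely what forces the exclusion of an $\epsilon$-neighborhood of the central line from the statement. Outside that neighborhood one of the gamma factors strictly dominates by Stirling, so $|g_s(1)|$ is bounded below by a positive quantity depending only on $t_0$, $\epsilon$, and $F$, and the comparison with the off-diagonal goes through. The whole argument is a direct Hilbert-modular adaptation of Kohnen's method in \cite{kohnen}, with classical Kloosterman sums and $J_{k-1}$ replaced by their Hilbert-modular counterparts.
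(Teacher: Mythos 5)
Your argument takes a genuinely different route from the one the paper (following \cite{HR}) actually uses for this theorem. The paper's proof is Kohnen's kernel-function method: one forms the series $R_{s,k}(z)$ of \eqref{eqn:kernel-function}, which represents the linear functional $f\mapsto\Lambda(f,s)$ on $\mathcal{S}_{\bk}(\SL(\ringO_F))$ as in \eqref{eqn:kernel-expansion}, and compares first Fourier coefficients using Proposition \ref{lem:lemma2}. The main term is then the explicit factor $(2\pi)^{ns}\Gamma^n(k-s)/\sqrt{|d_F|}$, the mirror term with $\Gamma^n(s)$ is negligible for $\sigma<\frac k2-\epsilon$ by Stirling, and the error is a sum over coprime pairs $(a,c)$ of confluent hypergeometric factors ${}_1f_1$, controlled by Lemmas \ref{normlemmatrotabas}, \ref{lem:luo} and \ref{lem:hyper-bound}. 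You instead expand $\Lambda(f,s)$ by an approximate functional equation and average the coefficients with a Hilbert--Petersson trace formula, so your off-diagonal is a Kloosterman--Bessel sum rather than a hypergeometric one. (Your closing attribution is off: \cite{kohnen} uses the kernel, not the trace formula.) The two computations are dual to one another --- the Fourier coefficients of $R_{s,k}$ are Poincar\'e-series coefficients weighted by approximate-functional-equation weights --- but yours makes the diagonal main term $C_\bk\,g_s(1)$ immediate and replaces the ${}_1F_1$ analysis by the familiar $J_{k-1}\ll (x/2)^{k-1}/\Gamma(k)$ decay, at the cost of importing the trace formula with all its unit-group baggage. Your identification of why the $\epsilon$-neighbourhood of $\sigma=k/2$ is excluded (possible cancellation between the $\Gamma(s)^n$ and $\Gamma(k-s)^n$ pieces, resolved by the Stirling ratio $(k/2)^{2n\delta}\ge(k/2)^{2n\epsilon}$) matches the paper's treatment of $I_1$ versus $I_2$.

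Two points are glossed over and need repair, and both are specific to the Hilbert setting. First, neither the Bessel arguments $4\pi\sqrt{\nu^{(i)}}/|c^{(i)}|$ nor the weights $g_s(\nu)$ depend only on the ideals $(\nu)$ and $(c)$: the trace formula carries an additional sum over totally positive units, and an individual embedding $|c^{(i)}|$ or $\nu^{(i)}$ can be tiny even when the norm is large, so ``exponential decay in $N(\nu)$'' and the absolute convergence of the $c$-sum are not automatic. This is exactly what Lemma \ref{normlemmatrotabas} (choose balanced generators) and Lemma \ref{lem:luo} (convergence of the unit sum) are for, and your proof must invoke them or something equivalent. Second, you should note that the root number in $\Lambda(f,s)=\pm\Lambda(f,k-s)$ is $(-1)^{nk/2}$, independent of $f$; otherwise the weights $g_s(\nu)$ could not be pulled outside the average over $f$. (By contrast, the Weil bound you invoke is unnecessary: the trivial bound $|\mathrm{KS}(\nu,1;c)|\le\norm(c)$ already suffices against the Bessel decay.) With these additions your argument goes through and yields the same quantitative picture as the paper.
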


Here,  we extend the result in \cite{HR} by showing that the derivatives of $L$-functions of cuspidal Hilbert modular forms with sufficiently large weight $k$ do not vanish on the line segments 
\begin{equation*}
\Im(s)=t_{0}, \ \ \ \  \Re(s)\in(\frac{k-1}{2},\frac{k}{2}-\epsilon)\cup(\frac{k}{2}+\epsilon,\frac{k+1}{2}).
\end{equation*}
More precisely, we prove the following theorem.

\begin{theorem}\label{thm:main}
 Let $\mathcal{B}_{\bk}(\ringO_{F})$ be a basis of normalized Hecke eigenforms of $\mathcal{S}_{\bk}(\SL(\ringO_{F}))$. Let $t_{0}\in\mathbb{R}$, $\epsilon>0$ and $\ell\in\mathbb{N}$. Then there exists a constant $C$ depending only on $t_{0}$, $\epsilon$ and $F$ such that for $k>C$ the average \[\sum_{f\in\mathcal{B}_{\bk}(\ringO_{F})}\frac{1}{\left<f,f\right>}\frac{d^{\ell}}{ds^{\ell}}\left(\Lambda(f,s)\right)\] is non-vanishing for any $s=\sigma+it_{0}$ with $\sigma\in(\frac{k-1}{2},\frac{k}{2}-\epsilon)\cup(\frac{k}{2}+\epsilon,\frac{k+1}{2})$.
\end{theorem}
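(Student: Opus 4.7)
The plan is to follow the general strategy developed in \cite{HR} for proving Theorem \ref{thm:old}, inserting an extra $s$-derivative at the appropriate step. In \cite{HR}, the average
\[
\sum_{f\in\mathcal{B}_{\bk}(\ringO_{F})}\frac{\Lambda(f,s)}{\langle f,f\rangle}
\]
is written via an integral transform involving the Hilbert Poincar\'e series (equivalently, via the Petersson trace formula), producing an explicit ``diagonal'' main term $M_{0}(s)$ together with an off-diagonal error $E_{0}(s)$ controlled by sums of $J$-Bessel functions of order $k-1$. My first step is to differentiate this identity $\ell$ times in $s$ under the sum; uniform convergence on compact subsets of the strip $\frac{k-1}{2}<\Re(s)<\frac{k+1}{2}$ justifies the interchange, and yields
\[
\sum_{f\in\mathcal{B}_{\bk}(\ringO_{F})}\frac{1}{\langle f,f\rangle}\frac{d^{\ell}}{ds^{\ell}}\Lambda(f,s) \;=\; M_{\ell}(s) + E_{\ell}(s),
\]
where $M_{\ell}(s)=M_{0}^{(\ell)}(s)$ and $E_{\ell}(s)$ is an explicit series differing from that of \cite{HR} only by the insertion of powers of logarithms arising from differentiating factors of the form $(2\pi\sqrt{\mu\nu}/c)^{-2s}$.

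The error bound in \cite{HR} then adapts with minor changes. Each differentiation brings down a factor $(\log(2\pi\sqrt{\mu\nu}/c))^{j}$, so I would majorize the new series by replacing the original absolutely convergent sum by one with an extra logarithmic weight. The super-polynomial decay in $k$ of $J_{k-1}$ on bounded intervals, combined with the absolute convergence established in \cite{HR}, allows me to absorb this logarithmic loss and conclude that $E_{\ell}(s)\to 0$ as $k\to\infty$ uniformly for $s=\sigma+it_{0}$ with $\sigma\in(\frac{k-1}{2},\frac{k}{2}-\epsilon)\cup(\frac{k}{2}+\epsilon,\frac{k+1}{2})$.

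The remaining step is to show $M_{\ell}(s)\neq 0$ on the two line segments once $k$ is sufficiently large. The main term in \cite{HR} has the shape of a linear combination of two products of Gamma factors coming from the functional equation, schematically
\[
M_{0}(s)\;=\;c_{F}\bigl((2\pi)^{-ns}\Gamma(s)^{n}+\eta\,(2\pi)^{n(s-k)}\Gamma(k-s)^{n}\bigr),
\]
for an appropriate sign $\eta$ and a harmless nonzero constant $c_{F}$. On $\sigma<\frac{k}{2}-\epsilon$, Stirling's formula makes the first summand dominated by the second by a factor that is super-polynomial in $k$, so $M_{\ell}(s)$ is asymptotic to $\frac{d^{\ell}}{ds^{\ell}}\bigl[(2\pi)^{n(s-k)}\Gamma(k-s)^{n}\bigr]=(2\pi)^{n(s-k)}\Gamma(k-s)^{n}\,P_{\ell}(s)$, where $P_{\ell}(s)$ is a polynomial of degree $\ell$ in $\psi(k-s)$ with leading coefficient $(-n)^{\ell}$ and remaining coefficients polynomial in $\log(2\pi)$ and the higher polygamma values $\psi^{(j)}(k-s)$. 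Since $\psi(k-s)\sim\log k$ as $k\to\infty$, $P_{\ell}(s)$ is dominated by its leading term and is therefore nonzero for $k$ large enough (depending on $\ell$, $t_{0}$, $\epsilon$). A symmetric argument handles $\sigma>\frac{k}{2}+\epsilon$. The main obstacle I anticipate is the uniform-in-$s$ bookkeeping: making sure that the logarithmic factors introduced by the $\ell$-fold differentiation do not spoil the decay in $k$ of the error, and that the gamma-factor dominance used for the main term holds uniformly across the intervals, so that the constant $C$ in the theorem genuinely depends only on $t_{0}$, $\epsilon$, $F$ (and $\ell$).
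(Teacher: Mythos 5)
Your overall architecture is the right one and, for the main term, essentially coincides with the paper's: differentiate the first-Fourier-coefficient identity from \cite{HR} $\ell$ times, show the dominant term is asymptotic to $(2\pi)^{ns}\Gamma^{n}(k-s)$ times a degree-$\ell$ polynomial in $\log(k-s)$ with leading coefficient $(-n)^{\ell}$, and check that the remaining terms vanish as $k\to\infty$. One quantitative slip: the dominance of the $\Gamma^{n}(k-s)$ term over the $\Gamma^{n}(s)$ term on $\Re(s)=\frac{k}{2}-\delta$ is only polynomial in $k$, of size $\asymp k^{-2n\delta}$ by Stirling, not super-polynomial; since $\delta>\epsilon$ is bounded below this still beats the $(\log k)^{\ell}$ factors introduced by differentiation, so your conclusion for the main term survives. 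You are also right that $C$ must be allowed to depend on $\ell$.

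The genuine gap is in the error term, and it stems from a mischaracterization of the identity being differentiated. The formula in \cite{HR} is not a Petersson/Poincar\'e-series expansion with off-diagonal $J$-Bessel terms; it is obtained from the first Fourier coefficient of the kernel $R_{s,k}$, and its error term is a sum over coprime pairs $(a,c)$ of products $\prod_{j}{}_{1}f_{1}\left(s,k,-\frac{2\pi i}{a_{j}c_{j}}\right)$ of (normalized) confluent hypergeometric functions, which themselves depend on $s$. Hence the $\ell$-fold differentiation does not merely bring down powers of $\log\left(\frac{\norm(c)}{\norm(a)}\right)$ from the power factor $\frac{\norm(c)^{s-k}}{\norm(a)^{s}}$: by Leibniz it also lands on the hypergeometric factors, and the crux of the proof is a bound on $\frac{d^{\nu}}{ds^{\nu}}{}_{1}f_{1}(s,k,ix)$, uniform in $k$ for $s=\frac{k}{2}-\delta+it_{0}$, that is simultaneously $O_{\nu}(1)$ and $O_{\nu}\left(|x|^{-1}(|s-1|+|k-s-1|)\right)$. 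This is Lemma~\ref{lem:hyper-bound}, proved via the Euler integral representation and integration by parts, and it is the main new technical ingredient beyond \cite{HR}; it must then be combined with Lemma~\ref{normlemmatrotabas} and Lemma~\ref{lem:luo} to control the sum over units before one can conclude $E_{1,\ell},E_{2,\ell}=O(k^{n})$ and hence that the normalized error tends to $0$. The super-polynomial decay of $J_{k-1}$ on bounded intervals that you invoke concerns an object that does not appear in the identity, so the assertion that the error analysis ``adapts with minor changes'' is not justified as written.
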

We obtain the following corollary as a direct consequence

\begin{corollary}
Let $t_{0}$, $\epsilon$, $\ell$ and $C$ be as in Theorem \ref{thm:main}. Then for $k>C$ and any $s=\sigma+it_{0}$ with $\sigma\in(\frac{k-1}{2},\frac{k}{2}-\epsilon)\cup(\frac{k}{2}+\epsilon,\frac{k+1}{2})$, there exists a Hecke eigenform $f\in \mathcal{S}_{\bk}(\SL(\ringO_{F}))$ such that $\frac{d^{\ell}}{ds^{\ell}}\left(\Lambda(f,s)\right)\neq 0$.
\end{corollary}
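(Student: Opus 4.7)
The plan is to adapt the strategy of \cite{HR} (which establishes the $\ell=0$ case for Hilbert modular forms) in the spirit of the derivative technique of \cite{KSW} for classical forms. The starting point is the Dirichlet series representation
\[
\Lambda(f,s)=A_F^{\,s}\,\Gamma(s)^{n}\sum_{\m\subseteq \ringO_{F}}\frac{c(f,\m)}{N(\m)^{s}},
\]
where $A_F$ is a positive real constant depending only on $F$ (involving $\sqrt{|d_F|}$ and $(2\pi)^{n}$) and the $c(f,\m)$ are the normalized Hecke eigenvalues. Differentiating $\ell$ times term-by-term yields
\[
\frac{d^{\ell}}{ds^{\ell}}\Lambda(f,s)=\sum_{\m}c(f,\m)\,G_{\ell}(s,\m),\qquad G_{\ell}(s,\m):=A_F^{\,s}\,\Gamma(s)^{n}N(\m)^{-s}\,Q_{\ell}(s,\m),
\]
where, by Leibniz and Fa\`a di Bruno, $Q_{\ell}(s,\m)$ is an explicit polynomial in $\log(A_F/N(\m))$ and the polygamma values $\psi(s),\psi'(s),\dots,\psi^{(\ell-1)}(s)$.

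The second step is to invoke the Hilbert--Petersson trace formula, exactly as in \cite{HR}, to convert the average of $\Lambda$-derivatives into
\[
\sum_{f\in\mathcal{B}_{\bk}(\ringO_{F})}\frac{1}{\left<f,f\right>}\frac{d^{\ell}}{ds^{\ell}}\Lambda(f,s)=M_{k}\,G_{\ell}(s,\ringO_{F})+\Sigma_{k}(s),
\]
where $M_{k}$ is a nonzero explicit constant proportional to $(4\pi)^{n(k-1)}/\Gamma(k-1)^{n}$, arising from the Kronecker-delta main term of the Petersson formula (using the normalization $c(f,\ringO_{F})=1$), and $\Sigma_{k}(s)$ is a double sum over integral ideals $\m\neq \ringO_{F}$ and moduli $\c$, whose summand is a Hilbert Kloosterman sum times a Bessel factor $J_{k-1}(\cdot)$ times $G_{\ell}(s,\m)$.

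To conclude, I would analyze the two pieces separately. For the main term, the asymptotics $\psi(s)\sim\log s$ and $\psi^{(j)}(s)=O(|s|^{-j})$ imply that on the specified line segments with $k$ large, $Q_{\ell}(s,\ringO_{F})\sim(n\log s+\log A_F)^{\ell}$, which is bounded below uniformly; combined with $\Gamma(s)^{n}\neq 0$ in the half-plane $\Re(s)>0$, this shows that $M_{k}G_{\ell}(s,\ringO_{F})$ is nonzero with an effective lower bound. For the error, the Bessel factor $J_{k-1}(x)$ decays like $(ex/(2k))^{k-1}$ in the relevant range, and the polynomial factor $(\log N(\m))^{\ell}$ sitting inside $Q_{\ell}(s,\m)$ is absorbed by the trivial bound $(\log N(\m))^{\ell}\ll_{\epsilon}N(\m)^{\epsilon}$, so that a dyadic decomposition, mirroring that of \cite{HR}, yields $\Sigma_{k}(s)=o(M_{k})$ uniformly on the target line segments as $k\to\infty$.

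\textbf{Main obstacle.} The principal new difficulty beyond \cite{HR} is propagating the logarithmic factors $(\log N(\m))^{\ell}$ through the Kloosterman--Bessel sum while retaining uniform decay in $k$. Although these factors are polylogarithmic and ultimately harmless, the bookkeeping is more delicate in the Hilbert setting than in \cite{KSW} because of the multi-index structure of the sum and the presence of totally positive units modulo squares. A secondary issue is verifying that $Q_{\ell}(s,\ringO_{F})$ is pointwise nonzero on the target line segments uniformly in $k$; the asymptotic analysis of the polygamma values handles this for $k$ large, but the estimate must preserve dependence on $t_{0}$, $\epsilon$, and $F$ alone, as stated in Theorem \ref{thm:main}.
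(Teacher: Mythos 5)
The statement at hand is the corollary, and the paper obtains it in one line from Theorem \ref{thm:main}: the average $\sum_{f\in\mathcal{B}_{\bk}(\ringO_{F})}\frac{1}{\left<f,f\right>}\frac{d^{\ell}}{ds^{\ell}}\left(\Lambda(f,s)\right)$ is a \emph{finite} sum which Theorem \ref{thm:main} guarantees is nonzero for $k>C$ and $s$ on the stated segments, so at least one summand is nonzero, and hence some Hecke eigenform $f$ satisfies $\frac{d^{\ell}}{ds^{\ell}}\left(\Lambda(f,s)\right)\neq 0$. Your proposal never makes this (only necessary) observation. Instead it sets out to re-establish the nonvanishing of the average from scratch, i.e.\ to reprove Theorem \ref{thm:main}, which the corollary is entitled to assume.

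Even read as a sketch of Theorem \ref{thm:main}, the argument has a genuine gap. You expand $\Lambda(f,s)$ as $A_F^{s}\Gamma(s)^{n}\sum_{\m}c(f,\m)\norm(\m)^{-s}$, differentiate term by term, and then average over $f$ via the Petersson trace formula. But in the arithmetic normalization used here (functional equation $s\mapsto k-s$), the Dirichlet series converges absolutely only for $\Re(s)>\frac{k+1}{2}$, whereas the target segments lie strictly inside the critical strip $\frac{k-1}{2}<\Re(s)<\frac{k+1}{2}$. Term-by-term differentiation, the interchange of the sum over $\m$ with the spectral average, and the claimed estimate $\Sigma_k(s)=o(M_k)$ are therefore all unjustified in the region where the theorem is asserted; your "main obstacle" paragraph worries about logarithmic factors, but the real obstruction is that the decomposition itself is not available there. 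The paper (following \cite{HR} and \cite{KSW}) avoids this entirely by working with the holomorphic kernel $R_{s,k}(z)$ of \eqref{eqn:kernel-function}, whose defining series converges for $1<\Re(s)<k-1$: equating the first Fourier coefficient of its explicit expansion (Proposition \ref{lem:lemma2}) with that of the spectral expansion \eqref{eqn:kernel-expansion}, differentiating $\ell$ times, and estimating the resulting terms $I_1$, $I_2$, $I_3$ via the digamma asymptotics and Lemmas \ref{lem:hyper-bound} and \ref{lem:bound} is what isolates the dominant term $\tilde{P}(\log(k-s))$. If you wish to prove Theorem \ref{thm:main}, that is the route to take; for the corollary itself, nothing beyond the one-line deduction is needed.
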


\section{Setting and Preliminaries}
In this note, we work over a totally real number field $F$ of degree $n$ over $\mathbb{Q}$ with ring of integers $\ringO_{F}$. The group of units in $\ringO_{F}$ is denoted by $\ringO_{F}^{\times}$. For simplicity of exposition, we assume that the narrow class number of $F$ is 1. 

The absolute norm of an ideal $\a\subset\ringO_{F}$ is given by $\norm(\a)= [\ringO_{F}:\a]$. The trace and the norm over $\mathbb{Q}$ of an element $x\in F$ are denoted by $\trace(x)$ and $\norm(x)$, respectively. We denote by $\Dif_{F}$ the different ideal of $F$ and  by $d_{F}$ its discriminant over $\mathbb{Q}$. We have the relation $\Dif_{F}=(d_{F})$ and $\norm(\Dif_{F})=|d_{F}|$.

The real embeddings of $F$ are denoted by $\sigma_{j}:x\mapsto x_{j}:=\sigma_{j}(x)$ for $j=1,\dots,n$. We say $x\in F$ is totally positive and write $x\gg0$ if $x_{j}>0$ for all $j$. Moreover, we use $X^{+}$ to denote the set of all totally positive elements in a subset $X$ of $F$. 

To simplify exposition, we will often make use of the following notation. For $c,d\in F$, $z=(z_{1},\dots,z_{n})\in\mathbb{C}^{n}$ and $s\in\mathbb{C}$, we set \[\norm(cz+d)^{s}=\prod_{j=1}^n(c_jz_j+d_j)^s.\] Moreover, we set \[\trace(cz)=\sum_{j=1}^{n}c_{j}z_{j}.\]

Let us now recall the following results which are crucial for establishing Theorem \ref{thm:main} (see the proof of Lemma \ref{lem:bound} below). 
\begin{lemma}\label{normlemmatrotabas}{$\mathrm{(Trotabas}$ \cite[Lemma~2.1]{trotabas}$\mathrm{)}$}
There exist constants $C_{1}$ and $C_{2}$ depending only on $F$ such that $$\forall \xi\in F,\exists\epsilon\in\ringO_{F}^{\times +},\forall j\in\{1,\dots,n\}: \quad C_{1}|\norm(\xi)|^{1/n}\leq|(\epsilon\xi)_{j}|\leq C_{2}|\norm(\xi)|^{1/n}.$$
\end{lemma}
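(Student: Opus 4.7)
The plan is to reduce the statement to a basic lattice covering fact via Dirichlet's unit theorem. The case $\xi=0$ is trivial, so suppose $\xi\in F^{\times}$. First I would recall the logarithm embedding
\[
\ell:\ringO_{F}^{\times}\to\R^n,\qquad \eta\mapsto(\log|\eta_1|,\ldots,\log|\eta_n|),
\]
whose image lies in the hyperplane $H:=\{x\in\R^n:x_1+\cdots+x_n=0\}$ (since $|\norm(\eta)|=1$ for a unit) and which, by Dirichlet's theorem, realizes $\ringO_{F}^{\times}/\{\pm1\}$ as a lattice of full rank $n-1$ in $H$. The subgroup $\ringO_{F}^{\times +}$ of totally positive units has finite index in $\ringO_{F}^{\times}$, so $\Lambda:=\ell(\ringO_{F}^{\times +})$ is still a full-rank lattice in $H$.

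Next I would attach to $\xi$ the vector
\[
v_\xi:=\left(\log|\xi_j|-\tfrac{1}{n}\log|\norm(\xi)|\right)_{j=1}^n,
\]
which lies in $H$ since $\prod_j|\xi_j|=|\norm(\xi)|$. Fixing once and for all a bounded fundamental domain $D\subset H$ for $\Lambda$ (depending only on $F$), I can find $\lambda\in\Lambda$ with $v_\xi-\lambda\in D$. Writing $\lambda=\ell(\eta)$ for some $\eta\in\ringO_{F}^{\times +}$ and setting $\epsilon:=\eta^{-1}$, which is again a totally positive unit, the $j$-th coordinate of $v_\xi-\lambda$ equals exactly
\[
\log|\epsilon_j\xi_j|-\tfrac{1}{n}\log|\norm(\xi)|.
\]

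Since $D$ is bounded, each such coordinate has absolute value at most a constant $M=M(F)$; exponentiating gives the lemma with $C_1=e^{-M}$ and $C_2=e^{M}$. I do not foresee any real obstacle: once Dirichlet's theorem has been invoked, the argument is a one-line lattice-covering statement. The only point that deserves explicit mention is the passage to totally positive units, which is handled by the finite-index remark so that $\Lambda$ inherits full rank in $H$; the uniformity of $C_1,C_2$ in $\xi$ then comes for free from the fact that $D$ is chosen independently of $\xi$.
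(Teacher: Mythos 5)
The paper does not prove this lemma itself but quotes it from Trotabas, whose proof is precisely the argument you give: the logarithmic embedding, Dirichlet's unit theorem, the finite-index reduction to totally positive units, and a bounded fundamental domain for the resulting full-rank lattice in the trace-zero hyperplane. Your proof is correct and matches that standard approach, so there is nothing to add.
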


\begin{lemma}\label{lem:luo}{$\mathrm{(Luo}$ \cite{luo}$\mathrm{)}$}
For $\lambda>0$, we have
\begin{equation}
\sum_{\eta\in\ringO_{F}^{\times+}}\prod_{|\eta_j|<1}|\eta_j|^{\lambda}<\infty.\end{equation}  
\end{lemma}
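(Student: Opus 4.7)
The plan is to reduce the sum to a convergent exponential sum over a lattice via the logarithmic embedding of units. Let $L\colon \ringO_F^\times \to \R^n$ denote the map $\eta\mapsto (\log|\eta_1|,\dots,\log|\eta_n|)$. Since every unit satisfies $|\norm(\eta)|=1$, the image of $L$ lies in the hyperplane $H=\{x\in\R^n:\sum_j x_j=0\}$, and Dirichlet's unit theorem says $L(\ringO_F^\times)$ is a lattice of rank $n-1$ in $H$. Because $\ringO_F^{\times+}$ has finite index in $\ringO_F^\times$, its image $\Lambda:=L(\ringO_F^{\times+})$ is still a full-rank lattice in $H$. The map $L$ is injective on $\ringO_F^{\times+}$: its kernel on $\ringO_F^\times$ consists of roots of unity, which in a totally real field are just $\{\pm1\}$, and only $+1$ is totally positive. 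Hence the sum over units can be rewritten as a sum over points of $\Lambda$.

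Next I would rewrite the summand. For $\eta\in\ringO_F^{\times+}$ set $x=L(\eta)$, so
\[
\prod_{|\eta_j|<1}|\eta_j|^\lambda=\exp\Bigl(\lambda\sum_{x_j<0}x_j\Bigr).
\]
Because $\sum_j x_j=0$, we have $\sum_{x_j>0}x_j=-\sum_{x_j<0}x_j=\tfrac12\|x\|_1$, so the summand equals $\exp(-\tfrac{\lambda}{2}\|x\|_1)$. Therefore the lemma is equivalent to the estimate
\[
\sum_{x\in\Lambda}\exp\Bigl(-\tfrac{\lambda}{2}\|x\|_1\Bigr)<\infty.
\]

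Finally, this is a routine lattice-point estimate. Since $\Lambda$ is a discrete subgroup of the $(n-1)$-dimensional Euclidean space $H$, the number of $x\in\Lambda$ with $\|x\|_1\le R$ is $O(R^{n-1})$ by standard volume comparison. Splitting the sum into dyadic annuli $\{R\le\|x\|_1<R+1\}$ and summing, or equivalently comparing to the integral $\int_0^\infty R^{n-2}e^{-\lambda R/2}\,dR$, gives absolute convergence for every $\lambda>0$. The only point that needs a word of care is that the set on which the product is nonempty coincides with the set on which $x$ has a strictly negative coordinate, while lattice points on the coordinate hyperplanes $x_j=0$ contribute a factor of $1$ from that index; since such $x$ are still counted once in $\Lambda$, they pose no issue. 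The main (and really only) obstacle is the bookkeeping in passing from the unit sum to the lattice sum cleanly; once that is done, convergence is immediate from the exponential decay of the summand against polynomial growth of the lattice-counting function.
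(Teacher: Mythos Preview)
Your argument is correct. The passage from the unit sum to a sum over the lattice $\Lambda=L(\ringO_F^{\times+})$ in the trace-zero hyperplane is exactly right, the identity $\sum_{x_j<0}x_j=-\tfrac12\|x\|_1$ is the key simplification, and the final convergence follows from the polynomial bound $\#\{x\in\Lambda:\|x\|_1\le R\}=O(R^{n-1})$ against the exponential weight.

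As for comparison with the paper: there is nothing to compare. The paper does not prove this lemma but simply quotes it from Luo \cite{luo}; your write-up is essentially the standard proof and is presumably what Luo does as well.
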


\section{Fourier Expansion of the Kernel Function for Hilbert Modular Forms}
Let us now define the kernel function for Hilbert modular forms over $F$. Let $z=(z_1,z_2,\dots,z_{n})\in\mathbb{H}^{n}$, $k\in2\mathbb{N}$ and let $s\in\mathbb{C}$ with $1<\Re(s)<{k}-1$. We have 
\begin{equation}\label{eqn:kernel-function}R_{s,k}(z)=\gamma_{k}(s)\sum_{ \begin{pmatrix}
a & b  \\
c & d  \end{pmatrix}\in T\backslash\SL(\mathcal{O}_{F})}\norm(cz+d)^{-k}\frac{\norm(az+b)^{-s}}{\norm(cz+d)^{-s}},\end{equation}
where $T=\left\{\begin{pmatrix}
\epsilon & 0 \\
0 & \epsilon^{-1}  \end{pmatrix}: \epsilon\in\ringO_{F}^{\times +}\right\}$ and $\displaystyle{\gamma_{k}(s)=\frac{\left(i^{s}\Gamma(s)\Gamma(k-s)\right)^n}{[\ringO_{F}^{\times}:\ringO_{F}^{\times+}]}}.$

In \cite{HR}, we computed the Fourier expansion of $R_{s,k}(z)$.
\begin{proposition}\label{lem:lemma2}
The function $R_{s,k}(z)$ has the following Fourier expansion:
\[R_{s,k}(z)=\sum_{\substack{\nu\in\Dif_{F}^{-1}\\\nu\gg0}}r_{s,k}(\nu)\exp\left(2\pi i\trace(\nu\bz)\right),\]
where \begin{align*}r_{s,k}(\nu)&=\frac{(2\pi)^{ns}\Gamma^n(k-s)}{\sqrt{|d_{F}|}}\norm(\nu)^{s-1}+(-1)^{n\frac{k}{2}}\frac{(2\pi )^{n(k-s)}\Gamma^n(s)}{\sqrt{|d_{F}|}}\norm(\nu)^{k-s-1}\\&\hspace{0.2in}+\gamma_{k}(s)\frac{(2\pi i)^{nk}\norm(\nu)^{k-1}}{\Gamma^n(k)\sqrt{|d_{F}|}}\sideset{}{^*}\sum_{\substack{(a,c)\in\ringO_{F}\times\ringO_{F} \\ac\neq0\\\gcd(a,c)=1}}\frac{\norm(c)^{s-k}}{\norm(a)^{s}}\exp\left(2\pi i\trace\left(\nu\frac{d_{0}}{c}\right)\right)\\&\hspace{2.5in}\times\prod_{j=1}^n{}_{1}F_{1}\left(s,k,-\frac{2\pi i\nu_{j}}{a_{j}c_{j}}\right).\end{align*}
\end{proposition}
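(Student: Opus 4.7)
The plan is to derive the Fourier expansion by stratifying a set of coset representatives of $T\backslash\SL(\ringO_F)$ according to which of $a,c$ vanish, summing each stratum as a function of $z$ periodic under translation by $\ringO_F$, and then reading off the $\nu$-th Fourier coefficient for $\nu\in\Dif_F^{-1}$ with $\nu\gg 0$.

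For the stratum $c=0$, the relation $ad=1$ forces $a\in\ringO_F^\times$ with $b\in\ringO_F$ free; after the $T$-quotient (which collapses the $\ringO_F^{\times+}$-action on $b$ and on the unit $a$, using that $k$ is even), this piece reduces to a single copy of $\sum_{b\in\ringO_F}\norm(z+b)^{-s}$. The multivariable Lipschitz formula, obtained by Poisson summation on the lattice $\ringO_F\subset F\otimes\R$,
\begin{equation*}
\sum_{b\in\ringO_F}\norm(z+b)^{-s}=\frac{(-2\pi i)^{ns}}{\Gamma(s)^n\sqrt{|d_F|}}\sum_{\substack{\nu\in\Dif_F^{-1}\\ \nu\gg 0}}\norm(\nu)^{s-1}e^{2\pi i\trace(\nu z)},
\end{equation*}
together with the prefactor $\gamma_{k}(s)$, produces the first term $\frac{(2\pi)^{ns}\Gamma(k-s)^n}{\sqrt{|d_F|}}\norm(\nu)^{s-1}$ of $r_{s,k}(\nu)$. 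The stratum $a=0$, $c\neq 0$ is analogous: there $c\in\ringO_F^\times$, $d\in\ringO_F$ is free, the summand becomes a constant times $\norm(z+d/c)^{s-k}$, and applying Lipschitz with $s$ replaced by $k-s$ delivers the second term; the sign $(-1)^{nk/2}$ emerges from combining $i^{ns}$ in $\gamma_{k}(s)$ with the branch of $(-c^{-1})^{-s}$, while the index $[\ringO_F^\times:\ringO_F^{\times+}]$ in the denominator of $\gamma_{k}(s)$ exactly balances the number of unit-class representatives of $c$.

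For the generic stratum $ac\neq 0$ with $\gcd(a,c)=1$, I would fix a particular solution $(b_0,d_0)$ of $ad_0-b_0c=1$ and parameterize the remaining $(b,d)$ by $t\in\ringO_F$ via $(b,d)=(b_0+ta,d_0+tc)$. After factoring out $\norm(a)^{-s}\norm(c)^{s-k}$, the $t$-sum becomes
\begin{equation*}
\sum_{t\in\ringO_F}\norm(z+b_0/a+t)^{-s}\norm(z+d_0/c+t)^{s-k},
\end{equation*}
to which Poisson summation on $\ringO_F$ (dual lattice $\Dif_F^{-1}$) applies. The resulting Fourier coefficient factors over the archimedean places as
\begin{equation*}
\prod_{j=1}^{n}\int_{\R}(z_j+b_0/a_j+w)^{-s}(z_j+d_0/c_j+w)^{s-k}e^{-2\pi i\nu_j w}\,dw,
\end{equation*}
and each factor, exploiting the identity $d_0/c_j-b_0/a_j=1/(a_jc_j)$ and translating the contour by the positive imaginary part of $z_j$, reduces by a standard Whittaker/Mellin--Barnes calculation to $\Gamma(k)^{-1}(2\pi i\nu_j)^{k-1}\,{}_{1}F_{1}(s,k,-\frac{2\pi i\nu_j}{a_jc_j})e^{2\pi i\nu_j z_j}$. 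The global phase $\exp(2\pi i\trace(\nu d_0/c))$ comes from the dual-lattice shift by $-b_0/a$, and assembling all factors with $\gamma_{k}(s)$ and $\norm(a)^{-s}\norm(c)^{s-k}$ produces the third term.

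The main obstacle is the bookkeeping of coset representatives under the infinite group $\ringO_F^{\times+}$ across the three strata---especially in the generic stratum, where the $T$-quotient is what the starred sum $\sum^{*}$ encodes---together with the componentwise verification of the Poisson-to-${}_1F_1$ identity. Absolute convergence throughout, which is what legitimises the interchange of the coset sum, the $t$-sum, and the Fourier integrals, is secured by the hypothesis $1<\Re(s)<k-1$: this is precisely the strip in which all three manipulations are simultaneously justified.
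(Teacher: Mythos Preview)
The present paper does not actually prove this proposition; it is quoted from the authors' earlier paper \cite{HR}, so there is no in-text argument to compare against. Your outline is the standard route---stratify $T\backslash\SL(\ringO_F)$ by $c=0$, $a=0$, $ac\neq0$; apply the multivariable Lipschitz/Poisson formula to the first two strata; and in the generic stratum parameterize by $t\in\ringO_F$, Poisson-sum, and identify the archimedean integrals as ${}_1F_1$---and this is precisely the method of \cite{HR}, which in turn adapts Kohnen's $\SL_2(\Z)$ computation \cite{kohnen} to the Hilbert setting. One cosmetic slip: the phase $\exp(2\pi i\trace(\nu d_0/c))$ arises from translating the integration variable by $-(z+d_0/c)$, not by $-b_0/a$ as you wrote, but either shift works and the difference is absorbed into the ${}_1F_1$ integral.
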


We also showed in \cite{HR} that $R_{s,k}(z)$ can be expressed as follows: \begin{align}\label{eqn:kernel-expansion}
R_{\overline{s},k}&=(-1)^{n\frac{k}{2}}\pi^n2^{n(2-k)}\Gamma^n(k-1)\sum_{f\in\mathcal{B}_{\bk}(\ringO_{F})}\frac{\Lambda(f,s)}{\left<f,f\right>}f, 
\end{align}
where $\mathcal{B}_{\bk}(\ringO_{F})$ is a basis of normalized Hecke eigenforms of $\mathcal{S}_{\bk}(\SL(\ringO_{F}))$,  and \[\Lambda(f,s)=|d_{F}|^{s}(2\pi)^{-ns}\Gamma^n(s)L(f,s).\] Recall that $\Lambda(f,s)$ satisfies the functional equation (see \cite[page~654]{shimura}) \begin{equation}\label{eqn:functional}\Lambda(k-s)=(-1)^{n\frac{k}{2}}\Lambda(s).\end{equation}

\section{Proof of Theorem \ref{thm:main}}\label{sec:proof}
In this section, we prove the main theorem of this paper following the recent work of Kohnen et al. \cite{KSW}. In view of the functional equation \eqref{eqn:functional}, it suffices to consider the left hand side of the critical strip. Hence, we take $s=\frac{k}{2}-\delta-it_{0}$ where $\epsilon<\delta<\frac12$ and $t_{0}\in\mathbb{R}$.

Taking the first Fourier coefficients on both sides of \eqref{eqn:kernel-expansion} and using Proposition \ref{lem:lemma2}, we get 
\begin{align}\label{eqn:main}&\frac{(2\pi)^{ns}\Gamma^n(k-s)}{\sqrt{|d_{F}|}}+(-1)^{n\frac{k}{2}}\frac{(2\pi)^{n(k-s)}\Gamma^n(s)}{\sqrt{|d_{F}|}}\nonumber\\&\hspace{0.3in}+\frac{(-1)^{n\frac{k}{2}}(2\pi )^{nk}i^{ns}}{\sqrt{|d_{F}|}[\ringO_{F}^{\times}:\ringO_{F}^{\times+}]}\nonumber\\&\hspace{0.5in}\times\sideset{}{^*}\sum_{\substack{(a,c)\in\ringO_{F}\times\ringO_{F} \\ac\neq0\\\gcd(a,c)=1}}\frac{\norm(c)^{s-k}}{\norm(a)^s}\exp\left(2\pi i\trace\left(\frac{d_{0}}{c}\right)\right)\prod_{j=1}^{n}{}_{1}f_{1}\left(s,k,-\frac{2\pi i}{a_{j}c_{j}}\right)\nonumber\\&=(-1)^{n\frac{k}{2}}\pi^n2^{n(2-k)}\Gamma^n(k-1)\sum_{f\in\mathcal{B}_{\bk}(\ringO_{F})}\frac{\Lambda(f,s)}{\left<f,f\right>},\end{align}

where \[{}_{1}f_{1}\left(s,k,-\frac{2\pi i}{a_{j}c_{j}}\right)=\frac{\Gamma(k-s)\Gamma(s)}{\Gamma(k)}{}_{1}F_{1}\left(s,k,-\frac{2\pi i}{a_{j}c_{j}}\right).\]

Taking the $\ell$-th derivative of both sides with respect to $s$ gives

\begin{align}\label{eqn:main}&\frac{1}{\sqrt{|d_F|}}\frac{d^{\ell}}{ds^{\ell}}\left[(2\pi)^{ns}\Gamma^n(k-s)\right]+\frac{(-1)^{n\frac{k}{2}}}{\sqrt{|d_{F}|}}\frac{d^{\ell}}{ds^{\ell}}\left[(2\pi)^{n(k-s)}\Gamma^n(s)\right]\nonumber\\&\hspace{0.3in}+\frac{(-1)^{n\frac{k}{2}}(2\pi )^{nk}}{\sqrt{|d_{F}|}[\ringO_{F}^{\times}:\ringO_{F}^{\times+}]}\nonumber\\&\hspace{0.5in}\times\sideset{}{^*}\sum_{\substack{(a,c)\in\ringO_{F}\times\ringO_{F} \\ac\neq0\\\gcd(a,c)=1}}\frac{d^{\ell}}{ds^{\ell}}\left[\frac{\norm(c)^{s-k}}{\norm(a)^s}\exp\left(\frac{\pi}{2} i ns\right)\exp\left(2\pi i\trace\left(\frac{d_{0}}{c}\right)\right)\prod_{j=1}^{n}{}_{1}f_{1}\left(s,k,-\frac{2\pi i}{a_{j}c_{j}}\right)\right]\nonumber\\&=(-1)^{n\frac{k}{2}}\pi^n2^{n(2-k)}\Gamma^n(k-1)\sum_{f\in\mathcal{B}_{\bk}(\ringO_{F})}\frac{1}{\left<f,f\right>}\frac{d^{\ell}}{ds^{\ell}}\left(\Lambda(f,s)\right),\end{align}

Let us consider first the expression 
\[I_1=\frac{1}{\sqrt{|d_F|}}\frac{d^{\ell}}{ds^{\ell}}\left[(2\pi)^{ns}\Gamma^n(k-s)\right].\] We have 
\begin{align*}
I_1&=\frac{1}{\sqrt{|d_F|}}\sum_{j=0}^{\ell}\binom{\ell}{j}\frac{d^{j}}{ds^{j}}\left[(2\pi)^{ns}\right]\frac{d^{\ell-j}}{ds^{\ell-j}}\left[\Gamma^n(k-s)\right]\\&=\frac{1}{\sqrt{|d_F|}}\sum_{j=0}^{\ell}\binom{\ell}{j}\left(n\log\left(2\pi\right)\right)^{j}(2\pi)^{ns}\sum_{\nu_1+\nu_2+\cdots+\nu_n=\ell-j}\frac{(\ell-j)!}{\nu_1!\nu_2!\cdots\nu_n!}\prod_{1\leq t\leq n}(-1)^{\nu_t}\Gamma^{(\nu_t)}(k-s)\\&=\frac{(2\pi)^{ns}}{\sqrt{|d_F|}}\sum_{j=0}^{\ell}(-1)^{\ell-j}\binom{\ell}{j}\left(n\log\left(2\pi\right)\right)^{j}\sum_{\nu_1+\nu_2+\cdots+\nu_n=\ell-j}\frac{(\ell-j)!}{\nu_1!\nu_2!\cdots\nu_n!}\prod_{1\leq t\leq n}\Gamma^{(\nu_t)}(k-s)\\&=\frac{(2\pi)^{ns}}{\sqrt{|d_F|}}\left(n\log(2\pi)\right)^{\ell}\Gamma^{n}(k-s)\\&\hspace{2em}+ \frac{(2\pi)^{ns}}{\sqrt{|d_F|}}\sum_{j=0}^{\ell-1}(-1)^{\ell-j}\binom{\ell}{j}\left(n\log\left(2\pi\right)\right)^{j}\sum_{\nu_1+\nu_2+\cdots+\nu_n=\ell-j}\frac{(\ell-j)!}{\nu_1!\nu_2!\cdots\nu_n!}\prod_{1\leq t\leq n}\Gamma^{(\nu_t)}(k-s).
\end{align*}

Hence,
\begin{align}\label{eqn:I1}
\frac{\sqrt{|d_F|}I_1}{(2\pi)^{ns}\Gamma^n(k-s)}&=\left(n\log(2\pi)\right)^{\ell}\\&\hspace{2em}+ \sum_{j=0}^{\ell-1}(-1)^{\ell-j}\binom{\ell}{j}\left(n\log\left(2\pi\right)\right)^{j}\sum_{\nu_1+\nu_2+\cdots+\nu_n=\ell-j}\frac{(\ell-j)!}{\nu_1!\nu_2!\cdots\nu_n!}\prod_{1\leq t\leq n}\frac{\Gamma^{(\nu_t)}(k-s)}{\Gamma(k-s)}.
\end{align}
In \cite{KSW}, it is observed that $\frac{\Gamma^{(m)(z)}}{\Gamma(z)}$ can be expressed as a polynomial $P\in\mathbb{Z}[\psi,\psi^{(1)},\dots,\psi^{(m)}]$, where $\psi=\frac{\Gamma'}{\Gamma}$ is the digamma function. It is important to note that the highest power of $\psi$ appearing in the polynomial $P$ is $\psi^m$. 


We also recall the following important estimates of the digamma function:

\[\psi(z)\sim\log(z)-\frac{1}{2z}-\sum_{k=1}^{\infty}\frac{B_{2k}}{2kz^{2k}}\]
\[\psi^{(m)}(z)\sim(-1)^{m-1}\left(\frac{(m-1)!}{z^m}+\frac{m!}{2z^{m+1}}+\sum_{k=1}^{\infty}B_{2k}\frac{(2k+m-1)!}{(2k)!z^{2k+m}}\right),\]
for $z\to\infty$ in $|\arg(z)|<\pi$, where $B_n$ is the $n$th Bernoulli number  (see \cite[6.3.18~\&~6.4.11]{stegun}).

For $s=\frac{k}{2}-\delta+it_0$ with $\epsilon<\delta<\frac12$, see that $\frac{\Gamma^{(\nu_t)}(k-s)}{\Gamma(k-s)}=P_t\left(\log(k-s)\right)+ o(1)$ where $P_t$ is a polynomial with integer coefficients and  degree $\nu_t$. It follows that 
\[\prod_{1\leq t\leq n}\frac{\Gamma^{(\nu_t)}(k-s)}{\Gamma(k-s)}=\prod_{1\leq t\leq n}P_t\left(\log(k-s)\right)+ o(1).\] 
\begin{align*}
\frac{\sqrt{|d_F|}I_1}{(2\pi)^{ns}\Gamma^n(k-s)}&=\left(n\log(2\pi)\right)^{\ell}\\&\hspace{2em}+ \sum_{j=0}^{\ell-1}(-1)^{\ell-j}\binom{\ell}{j}\left(n\log\left(2\pi\right)\right)^{j}\sum_{\nu_1+\nu_2+\cdots+\nu_n=\ell-j}\frac{(\ell-j)!}{\nu_1!\nu_2!\cdots\nu_n!}\prod_{1\leq t\leq n}P_t\left(\log(k-s)\right) +o(1)\\&=\tilde{P}\left(\log(k-s)\right)+o(1),
\end{align*}
where $\tilde{P}$ is a polynomial of degree $\ell$ and with leading coefficient $(-1)^{\ell}\sum_{\nu_1+\cdots+\nu_n=\ell}\frac{\ell!}{\nu_1!\cdots\nu_n!}$.

Next, we deal with the sum
\[I_2=\frac{(-1)^{n\frac{k}{2}}}{\sqrt{|d_{F}|}}\frac{d^{\ell}}{ds^{\ell}}\left[(2\pi)^{n(k-s)}\Gamma^n(s)\right].\]

We have \begin{align*}
I_2&=\frac{(-1)^{n\frac{k}{2}}(2\pi)^{nk}}{\sqrt{|d_F|}}\sum_{j=0}^{\ell}\binom{\ell}{j}\frac{d^{j}}{ds^{j}}\left[(2\pi)^{-ns}\right]\frac{d^{\ell-j}}{ds^{\ell-j}}\left[\Gamma^n(s)\right]\\&=\frac{(-1)^{n\frac{k}{2}}(2\pi)^{n(k-s)}}{\sqrt{|d_F|}}\sum_{j=0}^{\ell}\binom{\ell}{j}(-1)^j\left(n\log\left(2\pi\right)\right)^{j}\sum_{\nu_1+\nu_2+\cdots+\nu_n=\ell-j}\frac{(\ell-j)!}{\nu_1!\nu_2!\cdots\nu_n!}\prod_{1\leq t\leq n}\Gamma^{(\nu_t)}(s).
\end{align*}

It follows that 

\begin{align*}
\frac{\sqrt{|d_F|}I_2}{(2\pi)^{ns}\Gamma^n(k-s)}&=(-1)^{n\frac{k}{2}}(2\pi)^{n(k-2s)}\frac{\Gamma^n(s)}{\Gamma^n(k-s)}\sum_{j=0}^{\ell}\binom{\ell}{j}(-1)^j\left(n\log\left(2\pi\right)\right)^{j}\\&\hspace{1in}\times\sum_{\nu_1+\nu_2+\cdots+\nu_n=\ell-j}\frac{(\ell-j)!}{\nu_1!\nu_2!\cdots\nu_n!}\prod_{1\leq t\leq n}\frac{\Gamma^{(\nu_t)}(s)}{\Gamma(s)}.
\end{align*}
If $s=\frac{k}{2}-\delta+it_0$ with $\epsilon<\delta<\frac12$, we have

\begin{align*}\frac{\sqrt{|d_F|}I_2}{(2\pi)^{ns}\Gamma^n(k-s)}&=(-1)^{n\frac{k}{2}+\ell}(2\pi)^{n(k-2s)}(n\log(2\pi))^{\ell}\frac{\Gamma^n(s)}{\Gamma^n(k-s)}\\&+(-1)^{n\frac{k}{2}}(2\pi)^{n(k-2s)}\frac{\Gamma^n(s)}{\Gamma^n(k-s)}\left(\tilde{Q}\left(\log(k-s)\right)+o(1)\right),\end{align*}
where $\tilde{Q}$ is a polynomial of degree $\ell$ and with leading coefficient $\sum_{\nu_1+\cdots+\nu_n=\ell}\frac{\ell!}{\nu_1!\cdots\nu_n!}$.

Observe that (see \cite[6.1.23~\&~6.1.47]{stegun})
 \[\left|\frac{\Gamma^n(s)}{\Gamma^n(k-s)}\right|=\left|\frac{k}{2}+it_0\right|^{-2n\delta}\cdot\left|1+O\left(\frac{1}{\left|\frac{k}{2}+it_0\right|^n}\right)\right|,\] uniformly in $\epsilon<\delta<\frac12$. It follows that $\frac{\sqrt{|d_F|}I_2}{(2\pi)^{ns}\Gamma^n(k-s)}$ tends to $0$ as $k\to\infty$.

We still have to estimate the following sum:
\begin{align*}I_3&=\frac{(-1)^{n\frac{k}{2}}(2\pi )^{nk}}{\sqrt{|d_{F}|}[\ringO_{F}^{\times}:\ringO_{F}^{\times+}]}\nonumber\\&\hspace{0.5in}\times\sideset{}{^*}\sum_{\substack{(a,c)\in\ringO_{F}\times\ringO_{F} \\ac\neq0\\\gcd(a,c)=1}}\frac{d^{\ell}}{ds^{\ell}}\left[\frac{\norm(c)^{s-k}}{\norm(a)^s}\exp\left(\frac{\pi}{2} i ns\right)\exp\left(2\pi i\trace\left(\frac{d_{0}}{c}\right)\right)\prod_{t=1}^{n}{}_{1}f_{1}\left(s,k,-\frac{2\pi i}{a_{t}c_{t}}\right)\right].\end{align*}

We have
\begin{align*}
I_3&=\frac{(-1)^{n\frac{k}{2}}(2\pi )^{nk}}{\sqrt{|d_{F}|}[\ringO_{F}^{\times}:\ringO_{F}^{\times+}]}\sideset{}{^*}\sum_{\substack{(a,c)\in\ringO_{F}\times\ringO_{F} \\ac\neq0\\\gcd(a,c)=1}}\norm(c)^{-k}\sum_{j=0}^{\ell}\frac{\norm(c)^{s}}{\norm(a)^s}\log\left(\frac{\norm(c)}{\norm(a)}\right)^{j}\\&\hspace{9em}\times\frac{d^{\ell-j}}{ds^{\ell-j}}\left[\exp\left(\frac{\pi}{2} i ns\right)\exp\left(2\pi i\trace\left(\frac{d_{0}}{c}\right)\right)\prod_{t=1}^{n}{}_{1}f_{1}\left(s,k,-\frac{2\pi i}{a_{t}c_{t}}\right)\right].\end{align*}

For $1\leq j\leq \ell-1$, we have
\begin{align*}
&\frac{d^{\ell-j}}{ds^{\ell-j}}\left[\exp\left(\frac{\pi}{2} i ns\right)\exp\left(2\pi i\trace\left(\frac{d_{0}}{c}\right)\right)\prod_{t=1}^{n}{}_{1}f_{1}\left(s,k,-\frac{2\pi i}{a_{t}c_{t}}\right)\right]\\&=\sum_{m=0}^{\ell-j}\binom{\ell-j}{m}\exp\left(2\pi i\trace\left(\frac{d_{0}}{c}\right)\right)\left(\frac{\pi}{2}in\right)^{m}\exp\left(\frac{\pi}{2} i ns\right)\frac{d^{\ell-j-m}}{ds^{\ell-j-m}}\left[\prod_{t=1}^{n}{}_{1}f_{1}\left(s,k,-\frac{2\pi i}{a_{t}c_{t}}\right)\right]\\&=\sum_{m=0}^{\ell-j}\binom{\ell-j}{m}\exp\left(2\pi i\trace\left(\frac{d_{0}}{c}\right)\right)\left(\frac{\pi}{2}in\right)^{m}\exp\left(\frac{\pi}{2} i ns\right)\\&\hspace{5em}\times \sum_{\nu_1+\nu_2+\cdots+\nu_n=\ell-j-m}\frac{(\ell-j-m)!}{\nu_1!\nu_2!\cdots\nu_n!}\prod_{t=1}^{n}(-1)^{\nu_t}\frac{d^{\nu_t}}{ds^{\nu_t}}{}_{1}f_{1}\left(s,k,-\frac{2\pi i}{a_{t}c_{t}}\right).
\end{align*}

Therefore,

\begin{align*}
\frac{\sqrt{|d_F|}I_3}{(2\pi)^{ns}\Gamma^n(k-s)}&=\frac{(-1)^{n\frac{k}{2}}(2\pi )^{n(k-s)}}{\Gamma^{n}(k-s)[\ringO_{F}^{\times}:\ringO_{F}^{\times+}]}\sideset{}{^*}\sum_{\substack{(a,c)\in\ringO_{F}\times\ringO_{F} \\ac\neq0\\\gcd(a,c)=1}}\frac{\norm(c)^{s-k}}{\norm(a)^s}\exp\left(\frac{\pi}{2} i ns\right)\exp\left(2\pi i\trace\left(\frac{d_{0}}{c}\right)\right)\\&\hspace{2em}\times\Big[\log\left(\frac{\norm(c)}{\norm(a)}\right)^{\ell}\prod_{t=1}^{n}{}_{1}f_{1}\left(s,k,-\frac{2\pi i}{a_{t}c_{t}}\right)\\&\hspace{5em}+\sum_{j=0}^{\ell-1}\log\left(\frac{\norm(c)}{\norm(a)}\right)^{j}\sum_{m=0}^{\ell-j}\binom{\ell-j}{m}\left(\frac{\pi}{2}in\right)^{m}\\&\hspace{5em}\times \sum_{\nu_1+\nu_2+\cdots+\nu_n=\ell-j-m}\frac{(\ell-j-m)!}{\nu_1!\nu_2!\cdots\nu_n!}\prod_{t=1}^{n}(-1)^{\nu_t}\frac{d^{\nu_t}}{ds^{\nu_t}}{}_{1}f_{1}\left(s,k,-\frac{2\pi i}{a_{t}c_{t}}\right)\Big].\end{align*}

Let us now consider the sums
\begin{align*}
E_{1,\ell}(s,k)&=\sideset{}{^*}\sum_{\substack{(a,c)\in\ringO_{F}\times\ringO_{F} \\ac\neq0\\\gcd(a,c)=1}}\frac{\norm(c)^{s-k}}{\norm(a)^s}\exp\left(\frac{\pi}{2} i ns\right)\exp\left(2\pi i\trace\left(\frac{d_{0}}{c}\right)\right)\\&\hspace{2em}\times\log\left(\frac{\norm(c)}{\norm(a)}\right)^{\ell}\prod_{t=1}^{n}{}_{1}f_{1}\left(s,k,-\frac{2\pi i}{a_{t}c_{t}}\right)
\end{align*}
and
\begin{align*}
E_{2,\ell}(s,k)&=\sideset{}{^*}\sum_{\substack{(a,c)\in\ringO_{F}\times\ringO_{F} \\ac\neq0\\\gcd(a,c)=1}}\frac{\norm(c)^{s-k}}{\norm(a)^s}\exp\left(2\pi i\trace\left(\frac{d_{0}}{c}\right)\right)\exp\left(\frac{\pi}{2} i ns\right)\sum_{j=0}^{\ell-1}\log\left(\frac{\norm(c)}{\norm(a)}\right)^{j}\\&\hspace{2em}\times\sum_{m=0}^{\ell-j}\binom{\ell-j}{m}\left(\frac{\pi}{2}in\right)^{m} \sum_{\nu_1+\nu_2+\cdots+\nu_n=\ell-j-m}\frac{(\ell-j-m)!}{\nu_1!\nu_2!\cdots\nu_n!}\prod_{t=1}^{n}(-1)^{\nu_t}\frac{d^{\nu_t}}{ds^{\nu_t}}{}_{1}f_{1}\left(s,k,-\frac{2\pi i}{a_{t}c_{t}}\right).
\end{align*}
We have
\begin{align*}
E_{1,\ell}(s,k)&=\sum_{\substack{\eta\in\ringO_{F}^{\times+}\\c\in\ringO_{F}/\ringO_{F}^{\times+},\\c\neq0}}\sum_{\substack{a\in\ringO_{F}/\ringO_{F}^{\times+}\\\gcd(a,c)=1}}\frac{\norm(c)^{s-k}}{\norm(a)^s}\exp\left(\frac{\pi}{2} i ns\right)\exp\left(2\pi i\trace\left(\frac{d_{0}}{\eta c}\right)\right)\\&\hspace{2em}\times\log\left(\frac{\norm(c)}{\norm(a)}\right)^{\ell}\prod_{t=1}^{n}{}_{1}f_{1}\left(s,k,-\frac{2\pi i}{a_{t}\eta_t c_{t}}\right),
\end{align*}
and
\begin{align*}
E_{2,\ell}(s,k)&=\sum_{\substack{\eta\in\ringO_{F}^{\times+}\\c\in\ringO_{F}/\ringO_{F}^{\times+},\\c\neq0}}\sum_{\substack{a\in\ringO_{F}/\ringO_{F}^{\times+}\\\gcd(a,c)=1}}\frac{\norm(c)^{s-k}}{\norm(a)^s}\exp\left(2\pi i\trace\left(\frac{d_{0}}{\eta c}\right)\right)\exp\left(\frac{\pi}{2} i ns\right)\\&\hspace{5em}\times\sum_{j=0}^{\ell-1}\log\left(\frac{\norm(c)}{\norm(a)}\right)^{j}\sum_{m=0}^{\ell-j}\binom{\ell-j}{m}\left(\frac{\pi}{2}in\right)^{m}\\&\hspace{7em}\times \sum_{\nu_1+\nu_2+\cdots+\nu_n=\ell-j-m}\frac{(\ell-j-m)!}{\nu_1!\nu_2!\cdots\nu_n!}\prod_{t=1}^{n}(-1)^{\nu_t}\frac{d^{\nu_t}}{ds^{\nu_t}}{}_{1}f_{1}\left(s,k,-\frac{2\pi i}{a_{t}\eta_{t}c_{t}}\right).
\end{align*}
To deal with $E_{1,\ell}(s,k)$ and $E_{2,\ell}(s,k)$, we need the following lemma.

\begin{lemma}\label{lem:hyper-bound}
 Let $s=\frac{k}{2}-\delta+it_{0}$ where $\epsilon<\delta<\frac12$ and $t_{0}\in\mathbb{R}$. Let $\ell$ be a non-negative integer. For all $x\in\mathbb{R}$ and all sufficiently large $k$, we have \[\frac{d^{\ell}}{ds^{\ell}}\left({}_{1}f_{1}\left(s,k,ix\right)\right)\ll_{\ell} \min\left\{1,|x|^{-1}\left(|s-1|+|k-s-1|\right)\right\}.\]
\end{lemma}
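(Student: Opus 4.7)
The starting point is Kummer's integral representation
\[
{}_1f_1(s,k,ix)=\int_0^1 e^{ixt}\,t^{s-1}(1-t)^{k-s-1}\,dt,
\]
valid whenever $\Re(s)>0$ and $\Re(k-s)>0$, both of which hold for every $k>2\delta$. Differentiating $\ell$ times under the integral sign yields
\[
\frac{d^\ell}{ds^\ell}{}_1f_1(s,k,ix)=\int_0^1 e^{ixt}\,t^{s-1}(1-t)^{k-s-1}\bigl[\log t-\log(1-t)\bigr]^\ell\,dt.
\]
The two bounds appearing in the $\min$ are proved separately from this identity.

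For the bound $\ll_\ell 1$, I would take absolute values inside the integral, use $|e^{ixt}|=1$, and reduce to estimating
\[
\int_0^1 t^{k/2-\delta-1}(1-t)^{k/2+\delta-1}\bigl|\log t-\log(1-t)\bigr|^\ell\,dt.
\]
Splitting at $t=1/2$ and using the crude bound $|\log t-\log(1-t)|^\ell\ll_\ell |\log t|^\ell+|\log(1-t)|^\ell$, the piece on $[0,1/2]$ is controlled via $(1-t)^{k/2+\delta-1}\le 1$ together with the identity $\int_0^1 t^{a-1}|\log t|^\ell\,dt=\ell!/a^{\ell+1}$ evaluated at $a=k/2-\delta$, giving a bound of size $O_\ell(k^{-\ell-1})$; the $[1/2,1]$ piece is symmetric. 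For $k$ large this is comfortably $\ll_\ell 1$.

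For the bound $\ll_\ell |x|^{-1}(|s-1|+|k-s-1|)$, I would integrate by parts with $dv=e^{ixt}\,dt$. Since $\Re(s)-1>0$ and $\Re(k-s)-1>0$ for sufficiently large $k$, the boundary terms vanish (using $t^{\Re(s)-1}|\log t|^\ell\to 0$ as $t\to 0^+$, and symmetrically at $t=1$). Differentiating the product $t^{s-1}(1-t)^{k-s-1}[\log t-\log(1-t)]^\ell$ produces three groups of terms: one carrying the factor $s-1$ with $t^{s-2}(1-t)^{k-s-1}[\log t-\log(1-t)]^\ell$, one carrying the factor $-(k-s-1)$ with $t^{s-1}(1-t)^{k-s-2}[\log t-\log(1-t)]^\ell$, and a cross term $\ell\,t^{s-1}(1-t)^{k-s-1}(1/t+1/(1-t))[\log t-\log(1-t)]^{\ell-1}$ coming from differentiating the log power. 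After dividing by $ix$, each resulting integral is of the same weighted beta type with parameters shifted by one, and each is $O_\ell(1)$ by the argument of the previous paragraph.

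The main obstacle I anticipate is the cross term: once the $1/t+1/(1-t)$ factor is absorbed by $t^{s-1}$ and $(1-t)^{k-s-1}$ the exponents drop by one, and this contribution carries no factor of $|s-1|$ or $|k-s-1|$ to pair with. To fold it into the stated bound I would use that $|s-1|+|k-s-1|\gtrsim k$ for $s=\frac{k}{2}-\delta+it_0$ and $k$ large, so that the extra $O_\ell(1/|x|)$ piece is swallowed into $O_\ell(|x|^{-1}(|s-1|+|k-s-1|))$. Taking the minimum of the two bounds then completes the proof.
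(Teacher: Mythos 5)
Your proposal is correct and follows essentially the same route as the paper: the Kummer integral representation over $[0,1]$, the trivial bound for the first branch of the minimum, and a single integration by parts against $e^{ixt}$ to extract the factor $|x|^{-1}\left(|s-1|+|k-s-1|\right)$ for the second (the paper integrates by parts before differentiating in $s$, you do it after, which merely reshuffles the same terms). Your explicit handling of the cross term carrying no factor of $(s-1)$ or $(k-s-1)$ --- absorbing it via $|s-1|+|k-s-1|\gg 1$ for large $k$ --- is a detail the paper leaves implicit, and you resolve it correctly.
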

\begin{proof}
By \cite[13.2.1]{stegun}, we have \[{}_{1}f_{1}\left(s,k,ix\right)=\int_{0}^{1}\exp(ixu)u^{s-1}(1-u)^{k-s-1}\;du.\]  It follows that $\left|{}_{1}f_{1}\left(s,k,ix\right)\right|\leq 1$ whenever $\Re(s)>1$ and $\Re(k-s)>1$. Moreover, by differentiating both sides with respect to $s$, we get 

\begin{equation}\label{eqn:derivativebound1}\frac{d^{\ell}}{ds^{\ell}}\left({}_{1}f_{1}\left(s,k,ix\right)\right)\ll_{\ell}1\end{equation} for any non-negative integer $\ell$ (see \cite[page 326]{KSW}). On the other hand, integration by parts yields
\begin{align*}
{}_{1}f_{1}\left(s,k,ix\right)&=\int_{0}^{1}\exp(ixu)u^{s-1}(1-u)^{k-s-1}\;du\\&=-\frac{s-1}{ix}\int_{0}^{1}\exp(ixu)u^{s-2}(1-u)^{k-s-1}\;du\\&\hspace{0.5in}+\frac{k-s-1}{ix}\int_{0}^{1}\exp(ixu)u^{s-1}(1-u)^{k-s-2}\;du.
\end{align*} 
Taking the $\ell$-th derivative of both sides with respect to $s$ yields

\begin{align*}
\frac{d^{\ell}}{ds^{\ell}}\left({}_{1}f_{1}\left(s,k,ix\right)\right)
&=-\frac{1}{ix}\int_{0}^{1}\exp(ixu)u^{s-2}(1-u)^{k-s-2}\;du\\&\hspace{0.1in}-\frac{s-1}{ix}\int_{0}^{1}\exp(ixu)\sum_{j=0}^{\ell}(-1)^{\ell-j}\binom{\ell}{j}(\log u)^ju^{s-2}(\log(1-u))^{\ell-j}(1-u)^{k-s-1}\;du\\&\hspace{0.1in}+\frac{k-s-1}{ix}\int_{0}^{1}\exp(ixu)\sum_{j=0}^{\ell}(-1)^{\ell-j}\binom{\ell}{j}(\log u)^ju^{s-1}(\log(1-u))^{\ell-j}(1-u)^{k-s-2}\;du.
\end{align*}

Hence, \begin{equation}\label{eqn:derivativebound2}\frac{d^{\ell}}{ds^{\ell}}\left({}_{1}f_{1}\left(s,k,ix\right)\right)\ll_{\ell} |x|^{-1}\left(|s-1|+|k-s-1|\right)\end{equation} whenever $\Re(s)>2$ and $\Re(k-s)>2$.
The desired result follows from \eqref{eqn:derivativebound1} and \eqref{eqn:derivativebound2}.
\end{proof}


\begin{lemma}\label{lem:bound}
Let $s=\frac{k}{2}-\delta+it_{0}$ where $\epsilon<\delta<\frac12$ and $t_{0}\in\mathbb{R}$. As $k\to\infty$, we have $E_{1,\ell}(s,k)=O(k^n)$ and $E_{2,\ell}(s,k)=O(k^n)$ where the implicit constants depend only on $\ell$, $t_{0}$, $\delta$ and the field $F$.
\end{lemma}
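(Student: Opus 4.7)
The plan is to bound $|E_{1,\ell}(s,k)|$ and $|E_{2,\ell}(s,k)|$ termwise by absolute values, using three ingredients: the hypergeometric estimate of Lemma \ref{lem:hyper-bound}, the existence of balanced unit-orbit representatives from Lemma \ref{normlemmatrotabas}, and the convergence of a certain unit sum from Lemma \ref{lem:luo}. The structure is parallel to the bound for $\ell=0$ in \cite{HR}, with the derivative bookkeeping absorbed into constants depending on $\ell$.

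For $s=\frac{k}{2}-\delta+it_{0}$ with $\epsilon<\delta<\frac{1}{2}$, one has $|s-1|+|k-s-1|\ll k$ uniformly in $\delta$, the factor $|\exp(\frac{\pi i n s}{2})|=e^{-\pi n t_0/2}=O_{t_0,n}(1)$, and $|\exp(2\pi i\,\trace(d_{0}/(\eta c)))|=1$. Lemma \ref{lem:hyper-bound} then yields the uniform bound
\[\left|\frac{d^{\nu_{t}}}{ds^{\nu_{t}}}{}_{1}f_{1}\!\left(s,k,-\frac{2\pi i}{a_{t}\eta_{t}c_{t}}\right)\right|\ll_{\ell}\min\{1,k|a_{t}\eta_{t}c_{t}|\}\qquad(0\le\nu_{t}\le\ell).\]
By Lemma \ref{normlemmatrotabas}, choose representatives of $a,c\in\ringO_{F}/\ringO_{F}^{\times+}$ to be balanced, so that $|a_{t}|\ll\norm(a)^{1/n}$ and $|c_{t}|\ll\norm(c)^{1/n}$ for every $t$. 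For each $\eta\in\ringO_{F}^{\times+}$, split $\{1,\ldots,n\}$ into $T_{1}(\eta)=\{t:|\eta_{t}|<1\}$ and $T_{2}(\eta)=\{t:|\eta_{t}|\ge1\}$. Bounding $\min\{1,u\}\le u$ on $T_{1}(\eta)$ and $\min\{1,u\}\le 1$ on $T_{2}(\eta)$ gives
\[\prod_{t=1}^{n}\min\{1,k|a_{t}\eta_{t}c_{t}|\}\ll k^{n}(\norm(a)\norm(c))^{|T_{1}(\eta)|/n}\prod_{|\eta_{t}|<1}|\eta_{t}|\le k^{n}\norm(a)\norm(c)\prod_{|\eta_{t}|<1}|\eta_{t}|,\]
where the last inequality uses $|T_{1}(\eta)|\le n$ and $\norm(a),\norm(c)\ge1$.

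Combining this with $|\log(\norm(c)/\norm(a))|^{\ell}\ll(\log\norm(a))^{\ell}+(\log\norm(c))^{\ell}$ and substituting into the expression for $E_{1,\ell}$, we obtain
\[|E_{1,\ell}(s,k)|\ll_{\ell}k^{n}\!\left(\sum_{\eta\in\ringO_{F}^{\times+}}\prod_{|\eta_{t}|<1}|\eta_{t}|\right)\!\sum_{\substack{a,c\\ \gcd(a,c)=1}}\!\frac{(\log\norm(a))^{\ell}+(\log\norm(c))^{\ell}}{\norm(c)^{k/2+\delta-1}\norm(a)^{k/2-\delta-1}}.\]
The unit sum is finite by Lemma \ref{lem:luo} (with $\lambda=1$), and the $(a,c)$ sum is bounded above by a product of two (shifted) Dedekind zeta evaluations, which is $O(1)$ as soon as $k$ is large enough that both exponents exceed $1$. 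Hence $|E_{1,\ell}(s,k)|\ll k^{n}$. The treatment of $E_{2,\ell}$ is identical in structure: the finitely many extra summations over $j,m,\nu_{1},\ldots,\nu_{n}$ introduce only bounded combinatorial factors depending on $\ell$, and each derivative $\frac{d^{\nu_{t}}}{ds^{\nu_{t}}}{}_{1}f_{1}$ is controlled by the same estimate from Lemma \ref{lem:hyper-bound}.

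The main obstacle is that a uniform bound on $\prod_{t}\min\{1,k|a_{t}\eta_{t}c_{t}|\}$ independent of $\eta$ leaves no decay to sum against in the totally positive unit group. The $T_{1}(\eta)$ versus $T_{2}(\eta)$ split, paired with Trotabas's balanced representatives, is precisely the device that simultaneously extracts the factor $k^{n}$ and produces the factor $\prod_{|\eta_{t}|<1}|\eta_{t}|$ required by Luo's lemma.
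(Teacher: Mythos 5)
Your proposal is correct and follows essentially the same route as the paper's proof: termwise absolute values, the bound $\min\{1,k|a_t\eta_t c_t|\}$ from Lemma \ref{lem:hyper-bound}, the split of indices according to $|\eta_t|<1$ versus $|\eta_t|\ge 1$ combined with Trotabas's balanced representatives to extract $k^n\,|\norm(a)\norm(c)|\prod_{|\eta_t|<1}|\eta_t|$, and Luo's lemma with $\lambda=1$ to sum over units, with the remaining $(a,c)$-sum convergent for large $k$. The only cosmetic differences are that you work out $E_{1,\ell}$ in detail where the paper details $E_{2,\ell}$, and you control the logarithm factors by $(\log|\norm(a)|)^{\ell}+(\log|\norm(c)|)^{\ell}$ rather than absorbing them into $\epsilon$-shifts of the exponents.
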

\begin{proof}
Upon taking absolute values, we get 
\begin{align*}
\left|E_{2,\ell}(s,k)\right|&\leq \sum_{\eta\in\ringO_{F}^{\times +}}\sum_{\substack{c\in\ringO_{F}/\ringO_{F}^{\times +},\\c\neq0}}\sum_{\substack{a\in\ringO_{F}/\ringO_{F}^{\times +}\\\gcd(a,c)=1}}|\norm(c)|^{-\frac{k}{2}-\delta}|\norm(a)|^{\delta-\frac{k}{2}}\sum_{j=0}^{\ell-1}\left|\log\left(\frac{\norm(c)}{\norm(a)}\right)\right|^{j}\\&\hspace{5em}\times\sum_{m=0}^{\ell-j}\binom{\ell-j}{m}\left(\frac{\pi}{2}n\right)^{m}\exp\left(\frac{\pi}{2}  nt_0\right)\\&\hspace{5em}\times \sum_{\nu_1+\nu_2+\cdots+\nu_n=\ell-j-m}\frac{(\ell-j-m)!}{\nu_1!\nu_2!\cdots\nu_n!}\prod_{t=1}^{n}\left|\frac{d^{\nu_t}}{ds^{\nu_t}}{}_{1}f_{1}\left(s,k,-\frac{2\pi i}{a_{t}\eta_{t}c_{t}}\right)\right|.
\end{align*}
By Lemma \ref{lem:hyper-bound}, we know that  \[\frac{d^{\nu_t}}{ds^{\nu_t}}\left({}_{1}f_{1}\left(s,k,ix\right)\right)\ll_{\nu_t} \left(|s-1|+|k-s-1|\right) \left|\frac{2\pi}{\eta_{j}a_{j}c_{j}}\right|^{-\omega_j},\] where $\omega_j$ is either $0$ or $1$  depending on whether $|\eta_{j}|\geq1$ or $|\eta_{j}|<1$ respectively. Hence, we get \[\prod_{j=1}^n\left|\frac{d^{\nu_t}}{ds^{\nu_t}}\left({}_{1}f_{1}\left(s,k,ix\right)\right)\right|\ll_{\nu_t} \left(|s-1|+|k-s-1|\right)^{n}\prod_{|\eta_{j}|<1}\left|\frac{2\pi}{\eta_{j}a_{j}c_{j}}\right|^{-1}.\]
It follows that $E_{2,\ell}(s,k)$ is  \begin{equation}\label{eqn:S}\ll\left(|s-1|+|k-s-1|\right)^{n}\sum_{\eta\in\ringO_{F}^{\times +}}\sum_{\substack{c\in\ringO_{F}/\ringO_{F}^{\times +},\\c\neq0}}\sum_{\substack{a\in\ringO_{F}/\ringO_{F}^{\times +}\\\gcd(a,c)=1}}\frac{ \prod_{|\eta_{j}|<1}\left|\frac{2\pi}{\eta_{j}a_{j}c_{j}}\right|^{-1}}{|\norm(c)|^{\frac{k}{2}+\delta-\epsilon}|\norm(a)|^{\frac{k}{2}-\delta+\epsilon}}.\end{equation}
By Lemma~\ref{normlemmatrotabas}, we may assume that the representatives $a,c\in \ringO_{F}/\ringO_{F}^{\times +}$ in (\ref{eqn:S}) satisfy \begin{equation*}\label{eqn:norms}
\norm(a)^{1/n}\ll a_{j}\ll\norm(a)^{1/n}\quad \text{and} \quad |\norm(c)|^{1/n}\ll c_{j} \ll|\norm(c)|^{1/n},\end{equation*}
 for all  $j\in\{1,\cdots,n\}$ with implicit constants depending only on $F$. Therefore, we have \begin{equation*}E_{2,\ell}(s,k)\ll k^n\sum_{\eta\in\ringO_{F}^{\times +}}\prod_{|\eta_{j}|<1}|\eta_{j}|\sum_{\substack{c\in\ringO_{F}/\ringO_{F}^{\times +},\\c\neq0}}\sum_{\substack{a\in\ringO_{F}/\ringO_{F}^{\times +}\\\gcd(a,c)=1}}|\norm(c)|^{-\frac{k}{2}+1-\delta+\epsilon}|\norm(a)|^{-\frac{k}{2}+1+\delta-\epsilon}. \end{equation*}
Lemma \ref{lem:luo} allows us to factor out the sum over all $\eta\in\ringO_{F}^{\times +}$ since it is convergent and depends only on $F$. Thus, we get $E_{2,\ell}(s,k)=O(k^n)$ for sufficiently large $k$ as desired. We also emphasize that the implied constant in this estimate depends only on  $\delta$, $t_{0}$, $\ell$ and the field $F$. The sum $E_{1,\ell}(s,k)$ is treated similarly, and so we will not include the details here. 
\end{proof}
It follows from Lemma \ref{lem:bound} that 
\[\frac{\sqrt{|d_F|}I_3}{(2\pi)^{ns}\Gamma^n(k-s)}\ll\frac{k^n(2\pi)^{n\frac{k}{2}}}{\left|\Gamma^n(\frac{k}{2}+\delta-it_0)\right|},\] which tends to $0$
 as $k\to\infty$. Moreover, we have already established that as $k\to\infty$ we have
\[\frac{\sqrt{|d_F|}I_2}{(2\pi)^{ns}\Gamma^n(k-s)}\rightarrow0,\]  where as \[\frac{\sqrt{|d_F|}I_1}{(2\pi)^{ns}\Gamma^n(k-s)}\sim\tilde{P}(\log(\frac{k}{2}+\delta-it_0)),\] for some polynomial $\tilde{P}$ of degree $\ell$. Applying these estimates to \eqref{eqn:main}, we conclude that \[\sum_{f\in\mathcal{B}_{\bk}(\ringO_{F})}\frac{1}{\left<f,f\right>}\frac{d^{\ell}}{ds^{\ell}}\left(\Lambda(f,s)\right)\] is non-vanishing for any $s=\frac{k}{2}-\delta+it_{0}$ with $\epsilon<\delta<\frac12$ and $t_{0}\in\mathbb{R}$.


\normalsize

\end{document}